\definecolor{tocolor}{rgb}{.1,.1,.1}
\definecolor{urlcolor}{rgb}{.2,.2,.6}
\definecolor{linkcolor}{rgb}{.1,.1,.5}
\definecolor{citecolor}{rgb}{.4,.2,.1}
\newtheorem{thm}{Theorem}[section]
\newtheorem{cor}[thm]{Corollary}
\newtheorem{lem}[thm]{Lemma}
\theoremstyle{definition}
\newtheorem{defn}[thm]{Definition}
\theoremstyle{remark}
\newtheorem{rem}[thm]{Remark}
\numberwithin{equation}{section}
\newcommand{\blank}[1]{}
\newcommand{\JJ}{\mathbb J}
\newcommand{\R}{\mathbb R}
\newcommand{\C}{\mathbb C} 
\newcommand{\N}{\mathbb N}
\newcommand{\Lie}{\mathcal{L}}
\newcommand{\I}{\mathcal{I}}
\renewcommand{\phi}{\varphi} 
\renewcommand{\to}{\longrightarrow}
\newcommand{\oto}[1]{\overset{#1}\to}
\renewcommand{\mapsto}{\longmapsto}
\newcommand{\comp}{\circ}
\newcommand{\sr}{\mathcal} 
\renewcommand{\O}{\mathcal{O}}
\renewcommand{\Re}{\textrm{Re}} 
\newcommand{\del}{\partial}
\renewcommand{\^}{\wedge}
\renewcommand{\epsilon}{\varepsilon}
\newcommand{\hide}[1]{}
\newcommand{\Id}{\textrm{Id}}
\newcommand{\LP}{\mathfrak{L}}
\renewcommand{\Im}{\textrm{Im}}
\titleformat*{\section}{\large\bfseries}
\titleformat*{\subsection}{\large\bfseries}
\titleformat*{\subsubsection}{\large\bfseries}
\begin{document}

\title{\Large\sffamily Local analytic geometry of generalized complex structures}
\author{Michael Bailey\footnote{\href{mailto:M.A.Bailey@uu.nl}{\texttt{M.A.Bailey@uu.nl}}} \and Marco Gualtieri\footnote{\href{mailto:mgualt@math.toronto.edu}{\texttt{mgualt@math.toronto.edu}}}}
\date{}
\maketitle
\renewcommand{\abstractname}{\vspace{-7ex}}

\begin{abstract}
  A generalized complex manifold is locally gauge-equivalent to the product of a holomorphic Poisson manifold with a real symplectic manifold, but in possibly many different ways.  In this paper we show that the isomorphism class of the holomorphic Poisson structure occurring in this local model is independent of the choice of gauge equivalence, and is hence the unique local invariant of generalized complex manifolds.  This completes the local classification of generalized complex structures.
  We use this result to prove that the complex locus of a generalized complex manifold naturally inherits the structure of a complex analytic space. 
\end{abstract}

\section{Introduction}\label{gc definitions}

A generalized complex structure $\JJ$ on a smooth manifold $M$ is a complex structure on the bundle $TM\oplus T^*M$ that is involutive for the Courant bracket~\cite{MR2013140,MR2811595}.  
% Usually one includes an additional ``twisting'' by a closed 3-form, but we may ignore this issue here since we are only concerned with the local classification of this type of geometric structure.
Two such structures are isomorphic when they are related by a Courant automorphism, which is the composition of a diffeomorphism of $M$ with a bundle automorphism of $TM\oplus T^*M$ induced by a closed 2-form known as a B-field gauge transformation.  The gauge transformation induced by the closed 2-form $B$ is given by
\[
e^B\cdot(X+\xi) = X +  \xi+ i_X B,\qquad X+\xi\in TM\oplus T^*M.
\]
The simplest examples of generalized complex structures are those induced by a usual complex structure $I$ or a symplectic structure $\omega$, and have the form
\begin{equation}\label{usuform}
\JJ_I = \begin{pmatrix}-I & 0\\ 0& I^*\end{pmatrix},\qquad \JJ_\omega = \begin{pmatrix}0 & -\omega^{-1}\\\omega  & 0\end{pmatrix}.
\end{equation}
The example we shall focus on in this paper is induced by a holomorphic Poisson structure $(I,\sigma)$, consisting of a complex structure $I$ and a holomorphic bivector field $\sigma$ satisfying the Poisson condition $[\sigma,\sigma]=0$. Decomposing into real and imaginary parts, we have $\sigma = -\tfrac{1}{4}(IQ + i Q)$, for $Q = 4I\Re(\sigma)$ a real Poisson structure; the induced generalized complex structure is then
\begin{equation}\label{holpo}
\JJ_{\sigma}=  \begin{pmatrix}
   -I & Q\\
    0 & I^*
  \end{pmatrix}.
\end{equation}
The appearance of the real Poisson structure $Q$ is a general phenomenon: for any generalized complex structure $\JJ$, the bundle map $\pi_{TM}\circ \JJ|_{T^*M} : T^*M\to TM$ defines a Poisson structure $Q_\JJ$ whose rank partially controls the local geometry.  

It was shown recently that generalized complex manifolds are locally isomorphic to a product of the examples above.  Building on work in~\cite{MR2811595} and~\cite{MR2240211} on the local classification problem, Bailey obtained the following result. 
\begin{thm}[\cite{Bailey}]\label{bail}
  Let $(M,\JJ)$ be a generalized complex $2n$-manifold, and let $p$ be a point where $Q_\JJ$ has rank $2n-2k$. Then there exists a holomorphic Poisson structure $\sigma$ defined on a neighbourhood $U$ of the origin in $\C^k$ and vanishing at zero, such that at $p$, $(M,\JJ)$ is locally isomorphic to $(U,\JJ_\sigma)\times (\R^{2n-2k},\JJ_\omega)$ at $(0,0)$, where $\omega$ is the Darboux symplectic form.
\end{thm}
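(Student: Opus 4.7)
The plan is to split the theorem into two pieces: a Weinstein-type splitting that peels off the symplectic directions corresponding to the rank of $Q_\JJ$, and a local normal form on the transversal, where $Q_\JJ$ vanishes at $p$.

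\emph{Splitting step.} The real bivector $Q_\JJ$ is a genuine Poisson structure of rank $2n-2k$ at $p$, so Weinstein's theorem provides local coordinates in which $Q_\JJ$ decomposes as a Darboux symplectic form on $\R^{2n-2k}$ together with a Poisson structure vanishing at $p$ on a transversal $N$ of dimension $2k$. I would promote this to a splitting of $\JJ$ itself by showing that the symplectic leaf $S$ through $p$ is a generalized complex submanifold on which $\JJ$ restricts to $\JJ_\omega$, identifying a $\JJ$-invariant transverse complement using the $-i$-eigenbundle along $S$, and applying a Moser-type argument in the Courant category to trivialize a tubular neighbourhood of $S$ as a product $(S,\JJ_\omega)\x(N,\JJ|_N)$. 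This step should be essentially formal once one checks that the Courant bracket respects the proposed splitting.

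\emph{Complex-type step.} One is then reduced to the case $\dim M = 2k$ with $Q_\JJ(p)=0$, where one must show that $\JJ$ is locally gauge-equivalent to $\JJ_\sigma$ for some holomorphic Poisson bivector $\sigma$ vanishing at the origin of $\C^k$. The $+i$-eigenbundle $L\subset(TM\dsum T^*M)\tens \C$ at $p$ has the form $\cnj{T^{1,0}_pM}\dsum(T^{1,0}_pM)^*$ for a linear complex structure $I_p$, so I would parameterize candidate local structures by a pair $(I,B)$ consisting of an almost complex structure near $I_p$ together with a closed real two-form $B$, and ask that $e^B\cdot\JJ$ equal $\JJ_\sigma$ for some $\sigma$ in $I$-holomorphic coordinates. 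The integrability of $\JJ$ then becomes a coupled nonlinear system requiring $I$ to be integrable in the Newlander--Nirenberg sense and $B$ to solve a $\bar\del_I$-type equation.

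\emph{Main obstacle.} The linearization of this coupled system is elliptic, but the nonlinear corrections lose one derivative, so a naive Newton iteration does not close. I would therefore adopt a Nash--Moser scheme: construct smoothing operators on shrinking balls around $p$, run a modified Newton iteration for $(I,B)$, and establish the tame estimates needed for convergence in $C^\infty$ on a neighbourhood of $p$. The key analytic input is a Dolbeault--Poincar\'e lemma with parameters for the Lichnerowicz complex of $I_p$, together with the corresponding Hodge-theoretic splitting; I expect this step to be the crux of the proof. Once $(I,B)$ is obtained, integrability of $\JJ$ combined with $[Q_\JJ,Q_\JJ]=0$ forces the resulting bivector $\sigma$ to be holomorphic and Poisson in the $I$-holomorphic coordinates, and assembling with the splitting step produces the claimed local isomorphism $(M,\JJ)\iso(U,\JJ_\sigma)\x(\R^{2n-2k},\JJ_\omega)$.
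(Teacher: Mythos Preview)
Your overall strategy is sound and close to the paper's: the splitting step reducing to a point of complex type, followed by a Nash--Moser iteration to produce the holomorphic Poisson normal form, is exactly the architecture of the proof. A few differences and one minor gap are worth noting.

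\emph{On the splitting.} This step was already carried out in the references cited just before the theorem (Gualtieri, Abouzaid--Boyarchenko); Bailey's contribution is entirely the complex-type case $Q_\JJ(p)=0$. So your Weinstein/Moser outline is fine, but you should be aware that this part is not where the new content lies.

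\emph{On the iteration variables.} Your scheme takes the unknowns to be a pair $(I,B)$---an almost complex structure together with a closed $2$-form---and solves a coupled Newlander--Nirenberg/$\bar\partial$-type system. The paper instead fixes the background complex structure once and for all (after a scaling argument making the deformation arbitrarily $C^k$-small) and views $\JJ$ as a Maurer--Cartan element
\[
\epsilon = \epsilon^{2,0} + \epsilon^{1,1} + \epsilon^{0,2} \in C^\infty\bigl(\wedge^2(T_{1,0}\oplus T^*_{0,1})\bigr),
\]
acting on it by the full SCI-group of local Courant automorphisms (diffeomorphisms \emph{and} $B$-fields) to kill $\epsilon^{1,1}+\epsilon^{0,2}$. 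This deformation-theoretic framing has two practical advantages over yours: the group acting already contains the diffeomorphisms that would otherwise appear implicitly when you vary $I$, and the quadratic estimate~\eqref{quadratic estimate} becomes a single inequality on $\zeta(\epsilon)=\epsilon^{1,1}+\epsilon^{0,2}$ rather than a coupled estimate on $(I,B)$.

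\emph{On the analytic input.} The homotopy operator used is not for the Lichnerowicz (Poisson-cohomology) complex, but for the Dolbeault complex with values in holomorphic multivector fields, together with the Schouten bracket; the explicit correction is
\[
V(\epsilon) = P\bigl([\epsilon^{2,0}, P\epsilon^{0,2}] - \epsilon^{1,1} - \epsilon^{0,2}\bigr),
\]
where $P$ is a Dolbeault homotopy on the ball. Your ``Dolbeault--Poincar\'e lemma for the Lichnerowicz complex'' is not quite the right object; the Poisson structure enters only through the Schouten bracket term, not through the differential being inverted.
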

It does not follow from the above theorem that the holomorphic isomorphism class of the germ of the Poisson structure $\sigma$ at zero is uniquely determined by $\JJ$. This is because, as explained in~\cite{Gualtieri:2007fe}, one may generally find gauge transformations relating two holomorphic Poisson structures $(I,\sigma)$, $(J,\tau)$ which are not holomorphically equivalent.
That is, one may find a real closed 2-form $B$ such that 
\begin{equation}\label{gageq}
e^B
  \begin{pmatrix}
    -I & Q\\
    0 & I^*
  \end{pmatrix}
e^{-B}
=
  \begin{pmatrix}
    -J & Q\\
    0 & J^*
  \end{pmatrix},
\end{equation}
where the imaginary parts of $\sigma$ and $\tau$, forced to coincide by the above condition, are denoted by $Q$.  In general, Equation~\ref{gageq}, while it does express isomorphism as generalized complex structures, does not imply the existence of a biholomorphic map taking $(I,\sigma)$ to $(J,\tau)$.

Despite this concern, we shall prove in Corollary~\ref{mainres} that two germs of holomorphic Poisson structures near a point $p$ which vanish at $p$ are holomorphically equivalent if and only if their induced generalized complex structures are isomorphic.  An immediate corollary is that the local structure of a generalized complex manifold is completely characterized by the holomorphic equivalence class of such a germ.  

We say that points where the real Poisson structure $Q_\JJ$ vanishes are of \emph{complex type}, since, at these points, up to $B$-transform, $\JJ$ takes the form $\JJ_I$ in \eqref{usuform}.  By  Theorem~\ref{bail}, the locus of points of complex type can be described locally as the zero set of a holomorphic Poisson structure.  As an application of our results, we prove in Theorem~\ref{schem} that the analytic structure inherited by the complex locus by this local description is globally well-defined, rendering it into a complex analytic space. 

\vspace{1ex}
\noindent\emph{Acknowledgements:} We thank Gil Cavalcanti and Pierre Milman for helpful discussions.  This research was supported by an NSERC Discovery grant.

\section{Interpolation of holomorphic Poisson structures}\label{families of holomorphic poisson}

Our main technical tool, proven in Appendix \ref{SCI section}, is the
following parametrized version of Theorem \ref{bail}:

\begin{thm}\label{families prop}
  Let $\JJ_t$, $t\in [0,1]$ be a smooth family of generalized complex
  structures on a manifold $M$ that are all of complex type at the
  point $p\in M$ (i.e., $Q_{\JJ_t}$ vanishes at $p$).  Then, in a neighbourhood of $p$, there is a smooth
  family of gauge transformations by closed 2-forms $B_t$ which
  renders $\JJ_t$ isomorphic to a smooth family of holomorphic Poisson
  structures $(I_t,\sigma_t)$ with $\sigma_t(p)=0$ for all $t$.

  Furthermore, if $\JJ_0$ and $\JJ_1$ are already of holomorphic
  Poisson type~\eqref{holpo}, then the family $B_t$
  may be chosen such that $B_0=B_1=0$.
\end{thm}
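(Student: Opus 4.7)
The plan is to run Bailey's proof of Theorem~\ref{bail} from \cite{Bailey} with a parameter $t\in[0,1]$. Bailey's argument proceeds via a Nash--Moser iteration: at each step, one solves a linearized $\bar\partial$-type equation modulo a controlled smoothing error, and the iterated corrections converge (despite loss of derivatives) thanks to tame estimates. The task in the appendix is therefore to verify that this entire scheme can be carried out uniformly in the parameter.

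\smallskip

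\noindent\textbf{Step 1 (parametric Nash--Moser).} I would work with tame Fr\'echet spaces of smooth families of sections on $M\times[0,1]$, equipped with norms that incorporate both spatial and $t$-derivatives. The smoothing operators used by Bailey extend straightforwardly to parametric versions by acting trivially in the $t$ direction. The key check is that the right inverses of the linearized operators (which, for a fixed $t$, are $\bar\partial$-type operators with coefficients depending on $\JJ_t$) assemble into a tame right inverse on families, with tame estimates uniform over the compact interval $[0,1]$. Once these ingredients are in place, Hamilton's Nash--Moser theorem produces a smooth family $B_t$ of closed $2$-forms and a smooth family of diffeomorphisms bringing $\JJ_t$ into the form $\JJ_{(I_t,\sigma_t)}$, giving the first part of the theorem.

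\smallskip

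\noindent\textbf{Step 2 (endpoint condition).} For the second assertion, the strategy is to arrange the iteration so that it is \emph{canonical} on holomorphic Poisson structures, i.e., applied to a $\JJ$ already of the form \eqref{holpo} it returns $B=0$. This is achieved by designing the Newton step and the initial formal approximation so that the defect measured at each iterate vanishes identically on HP structures: if the input is HP, no correction is produced, and smoothness in $t$ then ensures that the endpoint values $B_0$ and $B_1$ remain trivial throughout the iteration. If this canonicity cannot be enforced internally, a fallback is to apply the first part to obtain some $B_t$ and then post-compose with a correction: replace the family $\JJ_t$ by the gauge-equivalent family $e^{-\chi(t)B_0-\rho(t)B_1}\JJ_t e^{\chi(t)B_0+\rho(t)B_1}$ for suitable cutoffs $\chi,\rho$ and iterate the construction near the endpoints, exploiting that closed $2$-forms commute in the B-field group.

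\smallskip

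\noindent\textbf{Main obstacle.} The principal difficulty I expect is in Step~1, namely establishing that the tame estimates used by Bailey survive in the parametric setting. In particular, one needs parametric tame right inverses to the linearized operators, and one must verify that $t$-differentiation and the smoothing operators are compatible up to tame errors. The second, subtler difficulty is preserving canonicity (Step~2) alongside smoothness in $t$, since any \emph{ad hoc} endpoint correction has the potential to destroy either the HP output or the smooth dependence on $t$.
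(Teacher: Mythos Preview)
Your plan is essentially the paper's own: run Bailey's Nash--Moser iteration over the total space $B_r\times[0,1]$ using $C^k$ norms in all variables, and handle the endpoints by the canonicity of the correction step (since $V(\epsilon)$ in~\eqref{correction formula} vanishes whenever $\zeta(\epsilon)=\epsilon^{1,1}+\epsilon^{0,2}$ does, the iteration restricts to the subspaces $\sr{T}_\del,\sr{V}_\del,\sr{G}_\del$ of objects trivial on $B_r\times\del S$). Your fallback endpoint correction is unnecessary.

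Two technical points in your sketch need adjustment. First, you invoke Hamilton's Nash--Moser on a fixed Fr\'echet space, but Bailey's argument requires the SCI (scaled $C^\infty$) framework because each automorphism in the iteration is only defined on a shrinking ball $B_{r'}\subset B_r$; the paper carries this over verbatim to tubular neighbourhoods $B_r\times S$. Second, smoothing ``trivially in the $t$ direction'' is incompatible with using $C^k$ norms that include $t$-derivatives: the smoothing estimates $\|S(\tau)u\|_p\leq C\tau^{p-q}\|u\|_q$ fail if $u$ has poor $t$-regularity and $S(\tau)$ ignores $t$. The paper instead smooths in all variables on the manifold-with-boundary $B_r\times[0,1]$ via the standard doubling trick \cite[II.1.3]{MR656198}, which simultaneously resolves your second worry: with the appropriate reflection, smoothing preserves vanishing on $B_r\times\del S$, so canonicity survives the Nash--Moser modification.
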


We use this result as follows.  Suppose that $\JJ$ is a generalized complex structure on $M$ and that $p\in M$ has complex type.  Suppose that we have two different gauge transformations by 2-forms $B_0$ and $B_1$ rendering $\JJ$ isomorphic to $\JJ_{\sigma_0}$ and $\JJ_{\sigma_1}$ respectively, where $(I_0,\sigma_0)$ and $(I_1,\sigma_1)$ are holomorphic Poisson structures with $\sigma_0(p)=\sigma_1(p)=0$ (where the existence of such gauge transforms are guaranteed by Theorem~\ref{bail}).  It follows immediately that the gauge transformation by the two-form $B = B_1-B_0$ satisfies
\[
e^B \JJ_{\sigma_0}e^{-B} = \JJ_{\sigma_1},
\]
and more importantly, we may scale the gauge action to obtain a smooth family of generalized complex structures
\[
\JJ_t = e^{tB}\JJ_{\sigma_0}e^{-tB},
\]
which interpolate between $\JJ_{\sigma_0}$ and $\JJ_{\sigma_1}$.  

We may now apply Theorem~\ref{families prop} to obtain a family of 2-forms $\tilde B_t$ that transforms $\JJ_t$ into an interpolating family of holomorphic Poisson structures, i.e. 
\[
e^{\tilde B_t}\JJ_t e^{-\tilde B_t} = \JJ_{\sigma_t},
\]
where $(I_t,\sigma_t)$ is a family of holomorphic Poisson structures with $\sigma_t(p)=0$ for all $t$.
\begin{cor}\label{intgau}
Let $(I_0,\sigma_0)$ and $(I_1,\sigma_1)$ be holomorphic Poisson structures with $\sigma_0(p)=\sigma_1(p)=0$ and with gauge-equivalent associated generalized complex structures $\JJ_{\sigma_0}$, $\JJ_{\sigma_1}$.  Then there exists a family of gauge transformations $B_t, t\in [0,1]$, defined in a sufficiently small neighbourhood of $p$, such that 
\begin{equation}\label{intgaug}
e^{B_t}\JJ_{\sigma_0} e^{-B_t} = \JJ_{\sigma_t}
\end{equation}
is the generalized complex structure associated to a family $(I_t,\sigma_t), {t\in[0,1]}$, of holomorphic Poisson structures interpolating between the given pair.
\end{cor}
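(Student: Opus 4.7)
\emph{Proof plan.} The plan is to reassemble the ingredients laid out in the paragraphs preceding the corollary. First, I would note that the hypothesis supplies a closed $2$-form $B$ with $e^B \JJ_{\sigma_0} e^{-B} = \JJ_{\sigma_1}$. Rescaling, I form the straight-line family
\[
\JJ_t := e^{tB}\JJ_{\sigma_0}e^{-tB}, \qquad t \in [0,1],
\]
which is a smooth interpolation between $\JJ_{\sigma_0}$ and $\JJ_{\sigma_1}$ through generalized complex structures. Since $B$-field gauge transformations fix the induced real Poisson tensor $Q_\JJ$, and $Q_{\JJ_{\sigma_0}}(p) = 0$ by assumption, every $\JJ_t$ is of complex type at $p$, so the family satisfies the hypotheses of Theorem~\ref{families prop}.

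Next I would invoke Theorem~\ref{families prop} for this family, crucially using its ``furthermore'' clause: because the endpoints $\JJ_0 = \JJ_{\sigma_0}$ and $\JJ_1 = \JJ_{\sigma_1}$ are already of holomorphic Poisson type, the theorem produces a smooth family $\tilde B_t$ of closed $2$-forms with $\tilde B_0 = \tilde B_1 = 0$ such that
\[
e^{\tilde B_t}\JJ_t e^{-\tilde B_t} = \JJ_{\sigma_t}
\]
is the generalized complex structure of a smooth family $(I_t,\sigma_t)$ of holomorphic Poisson structures with $\sigma_t(p) = 0$ for all $t$.

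Finally I would compose the two gauges. The group of $B$-field transformations is abelian under composition---an element $B \in \Omega^2(M)$ acts nilpotently on $TM\oplus T^*M$ with $B^2 = 0$, so $e^{B_1}e^{B_2} = e^{B_1+B_2}$ for any closed $2$-forms $B_1, B_2$---so setting
\[
B_t := \tilde B_t + tB
\]
gives $e^{B_t}\JJ_{\sigma_0} e^{-B_t} = \JJ_{\sigma_t}$. The endpoint values $B_0 = 0$ and $B_1 = B$ then recover $\JJ_{\sigma_0}$ and $\JJ_{\sigma_1}$ respectively, yielding the desired interpolating family.

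The hard part, by a wide margin, is Theorem~\ref{families prop} itself, whose proof is deferred to the appendix; relative to that result the corollary is a short and essentially formal deduction. The one item meriting attention here is the verification of the hypotheses of the parametrized theorem for the straight-line family---smoothness in $t$ (immediate) and complex type at $p$ for all $t$, which holds precisely because $B$-field transformations do not alter $Q_\JJ$.
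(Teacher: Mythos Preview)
Your proposal is correct and follows essentially the same route as the paper: the argument preceding the corollary forms the straight-line family $\JJ_t = e^{tB}\JJ_{\sigma_0}e^{-tB}$, applies Theorem~\ref{families prop} (with its ``furthermore'' clause at the endpoints) to obtain $\tilde B_t$, and then the composite $B_t = \tilde B_t + tB$ gives the claimed interpolation. Your added remarks that $B$-field transforms preserve $Q_\JJ$ and compose additively make explicit two small points the paper leaves implicit.
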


\section{Holomorphic equivalence from gauge equivalence}\label{holomorphic from gauge section}

Let $(I_t,\sigma_t)$, $t\in[0,1]$, be a smooth family of holomorphic Poisson structures, vanishing at point $p$, which, as in Corollary~\ref{intgau}, are all gauge-equivalent, in the sense that we have a smooth family of real closed 2-forms $B_t$ such that Equation~\ref{intgaug} holds.
\begin{lem}\label{gauge to hol}
In a sufficiently small neighbourhood of $p$, the family $(I_t,\sigma_t)$ is generated by the flow of a vector field, that is, there is a real vector field $X_t$ near $p$ such that $\dot I_t = \Lie_{X_t} I_t$ and $\dot \sigma_t = \Lie_{X_t} \sigma_t$.  $X_t$ is Hamiltonian for the real Poisson structure $Q = \Im(\sigma_t)$.
\end{lem}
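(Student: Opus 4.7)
The plan is to differentiate the gauge equivalence to pin down $\dot I_t$ algebraically, invoke a local $\partial\bar\partial$-lemma to produce a real potential $f_t$, and verify that the Hamiltonian vector field of $f_t$ generates the family $(I_t,\sigma_t)$.

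Since the $B$-field action on $TM\oplus T^*M$ is unipotent of order two (each $B_t$ squares to zero as an endomorphism), $e^{B_t} = \mathrm{Id}+B_t$ exactly, and setting $\beta_t:=\dot B_t$ the derivative of $e^{B_t}\JJ_{\sigma_0}e^{-B_t}=\JJ_{\sigma_t}$ reduces to $\dot\JJ_{\sigma_t}=[\beta_t,\JJ_{\sigma_t}]$. Expanding this commutator in the block form $\JJ_{\sigma_t}=\bigl(\begin{smallmatrix}-I_t & Q\\0 & I_t^*\end{smallmatrix}\bigr)$, in which $Q$ is independent of $t$ because $B$-gauges preserve $Q_\JJ$, yields the identities
\[
\dot I_t \;=\; Q^\sharp\beta_t, \qquad \beta_tI_t + I_t^*\beta_t \;=\; 0.
\]
The second identity says exactly that $\beta_t$ is of type $(1,1)$ with respect to $I_t$.

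On a sufficiently small polydisc about $p$ we apply the local $\partial_{I_t}\bar\partial_{I_t}$-lemma to obtain a smooth $t$-family of real functions $f_t$ with $\beta_t = i\,\partial_{I_t}\bar\partial_{I_t}f_t$; parametric smoothness follows by first solving $\beta_t = d\alpha_t$ via the standard Poincar\'e homotopy on the polydisc and then applying the $I_t$-Dolbeault homotopy to the appropriate type-components of $\alpha_t$. Define the real Hamiltonian vector field
\[
X_t \;:=\; -\tfrac12\,Q^\sharp(df_t),
\]
so that $\Lie_{X_t}Q = 0$ by the Jacobi identity.

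To verify $\Lie_{X_t}I_t=\dot I_t$, decompose $X_t = V_t + \bar V_t$ with $V_t = -i\,\sigma_t^\sharp(\partial_{I_t}f_t) \in T^{1,0}_{I_t}$, using the identity $Q^\sharp|_{(T^{1,0}_{I_t})^*}=2i\,\sigma_t^\sharp$ (which reflects the decomposition $Q=2i(\sigma_t-\bar\sigma_t)$). Holomorphicity of $\sigma_t$ gives $\bar\partial_{I_t}V_t = i\,\sigma_t^\sharp(\partial_{I_t}\bar\partial_{I_t}f_t)$; then for $W\in T^{0,1}_{I_t}$ the Kodaira--Spencer formula $(\Lie_{V_t}I_t)W = 2i\,\iota_W\bar\partial_{I_t}V_t$, combined with the fact that $\Lie_{\bar V_t}I_t$ vanishes on $T^{0,1}_{I_t}$ by integrability of $I_t$, produces
\[
(\Lie_{X_t}I_t)\,W \;=\; -2\,\sigma_t^\sharp(\iota_W\partial_{I_t}\bar\partial_{I_t}f_t) \;=\; Q^\sharp(\iota_W\beta_t) \;=\; \dot I_t\,W,
\]
where the middle equality uses that $\iota_W\partial_{I_t}\bar\partial_{I_t}f_t\in (T^{1,0}_{I_t})^*$. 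The bivector equation is then automatic from $\sigma_t = -\tfrac14(I_tQ+iQ)$ and $\Lie_{X_t}Q=0$:
\[
\Lie_{X_t}\sigma_t \;=\; -\tfrac14\,(\Lie_{X_t}I_t)\,Q \;=\; -\tfrac14\,\dot I_t\,Q \;=\; \dot\sigma_t.
\]

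The main technical obstacle is the parametric $\partial\bar\partial$-lemma: the potentials $f_t$ must depend smoothly on $t$ while the complex structure $I_t$ (and hence the very notion of $(1,1)$-type) also varies, handled by carrying out the Poincar\'e and Dolbeault homotopies in a fixed real-analytic chart with smooth $t$-dependence of the integrands.
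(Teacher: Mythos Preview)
Your argument is essentially the paper's: differentiate the gauge relation to obtain $\dot I_t = Q\dot B_t$ together with the $(1,1)$ condition on $\dot B_t$, invoke the local $\partial\bar\partial$-lemma to produce a real potential $f_t$, and then verify that the associated $Q$-Hamiltonian vector field generates the variation of $I_t$ via the Kodaira--Spencer identity $(\Lie_V I)W = 2i\,\iota_W\bar\partial V$ (equivalently, the paper's ``fundamental formula'' $\bar\partial X_{1,0} = -\tfrac12 I\,\Lie_X I$). The only differences are cosmetic---you read off the two identities from the single commutator $\dot\JJ_{\sigma_t}=[\dot B_t,\JJ_{\sigma_t}]$ rather than from the paper's pair of block equations, and you normalize $X_t$ with an extra factor of $-\tfrac12$---together with the fact that you are somewhat more explicit than the paper about arranging smooth $t$-dependence of the potential $f_t$.
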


\begin{proof}
Since $\sigma_t$ is determined by $Q$ and $I_t$, we need only prove the claim for $I_t$.  Explicitly, Equation~\eqref{intgaug} gives
\begin{equation*}
\begin{pmatrix}
  1 & 0\\ B_t & 1
\end{pmatrix}
  \begin{pmatrix}
    -I_0 & Q\\
    0 & I_0^*
  \end{pmatrix}
\begin{pmatrix}
  1 & 0\\ -B_t & 1
\end{pmatrix}
=
  \begin{pmatrix}
    -I_t & Q\\
    0 & I_t^*
  \end{pmatrix},
\end{equation*}
which is equivalent to the pair of equations, studied in~\cite{Gualtieri:2007fe}: 
  \begin{align}
     I_0 + QB_t &= I_t\label{one}\\
     I_0^* B_t + B_t I_t &=0.\label{two}
  \end{align}
Differentiating~\eqref{one}, we obtain the variation of the complex structure:
\begin{equation}\label{varcx}
\dot I_t =  Q \dot B_t.
\end{equation}
Differentiating~\eqref{two} and using~\eqref{one}, we obtain
\begin{align}\label{1,1 cond}
I_t^*\dot B_t + \dot B_t I_t = 0,
\end{align}
meaning that $\dot B_t$ is of type $(1,1)$ with respect to the complex structure $I_t$.  Since $B_t$, and hence $\dot B_t$, is also real and closed, we may find, in a sufficiently small neighbourhood of the point $p$, a smooth family of real-valued functions $f_t$ such that
\begin{align*}
\dot B_t &= i \bar\del_t \del_t f_t.
\end{align*}
In view of~\eqref{varcx}, this implies that 
\begin{align}
\dot I_t &= Q (i \bar\del_t \del_t f_t) \notag \\
&= 4I\Re(\sigma(i \bar\del_t \,d\, f_t)) \notag \\
&= 4I\Re(\bar\del_t \sigma(id f_t)) \qquad \textnormal{(since $\sigma$ is holomorphic)} \notag \\
&= 2I\Re(\bar\del_t (X_t)_{1,0}) \label{varcxf}
\end{align}
for $X_t = Q df_t$.  For a real vector field $X$, we have the fundamental formula
\begin{align}\label{ff}
\bar\del X_{1,0} = -\tfrac{1}{2} I (\Lie_X I).
\end{align}
From \eqref{varcxf} and \eqref{ff}, we have
\begin{align}
\dot I_t &= I \Re\left(I(\Lie_{X_t} I)\right) \notag \\
&= \Lie_{X_t} I.
\end{align}
\end{proof}

We now show that an analogue of Corollary~\ref{intgau} holds, where the the holomorphic Poisson structures are related, not by gauge equivalence, but by diffeomorphism.
\begin{thm}\label{hamthm}
  Let $(I_0,\sigma_0)$ and $(I_1,\sigma_1)$ be holomorphic Poisson
  structures in a neighbourhood of the point $p$, with
  $\sigma_0(p)=\sigma_1(p)=0$ and with gauge-equivalent associated
  generalized complex structures $\JJ_{\sigma_0}$, $\JJ_{\sigma_1}$.
  Then there exists a Hamiltonian flow $\varphi_t$, defined in a
  sufficiently small neighbourhood of $p$, such that
\begin{equation}
\varphi_t(I_0) = I_t\qquad\text{and}\qquad \varphi_t(\sigma_0)=\sigma_t,
\end{equation}
implying the holomorphic equivalence of $(I_t,\sigma_t)$ for all $t\in[0,1]$.
\end{thm}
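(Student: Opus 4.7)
The plan is to apply Corollary~\ref{intgau} and Lemma~\ref{gauge to hol} to produce a time-dependent Hamiltonian vector field, and then apply Moser's trick to integrate it into the desired flow $\varphi_t$. Specifically, Corollary~\ref{intgau} supplies a smooth interpolating family $(I_t,\sigma_t)$ of holomorphic Poisson structures, vanishing at $p$, with associated $\JJ_{\sigma_t}$ all mutually gauge-equivalent. Lemma~\ref{gauge to hol} then provides a smooth real vector field $X_t = Q\,df_t$, Hamiltonian for the common imaginary part $Q = \Im(\sigma_t)$, satisfying
\begin{equation*}
\dot I_t = \Lie_{X_t} I_t, \qquad \dot\sigma_t = \Lie_{X_t}\sigma_t
\end{equation*}
on a neighbourhood of $p$.

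The key technical observation is that $X_t$ vanishes at $p$ for every $t$: since $\sigma_t(p) = 0$ forces $Q(p) = 0$, and $X_t = Q\,df_t$, we have $X_t(p) = 0$. Hence the constant path $t\mapsto p$ is an integral curve of the time-dependent ODE, and by continuous dependence on initial conditions over the compact interval $[0,1]$, I may integrate $-X_t$ to a flow $\varphi_t$ (with $\varphi_0 = \Id$) defined on a single neighbourhood of $p$ for every $t \in [0,1]$, with $\varphi_t(p) = p$. Because $-X_t$ is Hamiltonian with Hamiltonian $-f_t$ for $Q$, the flow $\varphi_t$ is a Hamiltonian flow.

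Finally, a standard Moser-type computation using the identity $\tfrac{d}{dt}(\varphi_t^* T_t) = \varphi_t^*(\dot T_t - \Lie_{X_t} T_t)$ for the flow of $-X_t$, applied with $T_t = I_t$ or $\sigma_t$ together with the Lie-derivative evolution equations above, shows that $\varphi_t^* I_t$ and $\varphi_t^* \sigma_t$ are independent of $t$ and therefore equal to $I_0$ and $\sigma_0$ respectively. This yields $\varphi_t(I_0) = I_t$ and $\varphi_t(\sigma_0) = \sigma_t$ as required. The only non-routine step in the proof is securing the uniform existence of the flow on a fixed neighbourhood of $p$, which is handled by the vanishing of $X_t$ at $p$; everything else is a direct calculation.
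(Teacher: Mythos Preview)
Your proof is correct and follows essentially the same approach as the paper: invoke Corollary~\ref{intgau} and Lemma~\ref{gauge to hol} to obtain the time-dependent Hamiltonian field $X_t$, use the vanishing of $Q$ (hence $X_t$) at $p$ to ensure the flow exists on a fixed neighbourhood for all $t\in[0,1]$, and conclude that this flow carries $(I_0,\sigma_0)$ to $(I_t,\sigma_t)$. The paper's proof is simply more terse, leaving the Moser-type calculation implicit, whereas you have written it out explicitly.
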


\begin{proof}
Given the hypotheses, Corollary~\ref{intgau} puts us in the case of Lemma~\ref{gauge to hol}.  Since $Q$ vanishes at $p$, the flow
of $X_t$ is well-defined for all $t\in[0,1]$ in a sufficiently
small neighbourhood of $p$.  Therefore, the flow of the
time-dependent Hamiltonian vector field $X_t$ defines a family of
diffeomorphisms $\varphi_t$ taking $(I_0,\sigma_0)$ to
$(I_t,\sigma_t)$ for all $t$.
\end{proof}

Combining this result with Corollary~\ref{intgau}, we obtain our main result, which ensures that the holomorphic isomorphism class of the holomorphic Poisson structure $\sigma$ in Theorem~\ref{bail} is unique.
\begin{cor}\label{mainres}
Let $(M,\JJ)$ be a generalized complex manifold and let $\JJ$ be of complex type at $p\in M$. If the germ of $\JJ$ at $p$ is isomorphic to the generalized complex structure determined by each of two holomorphic Poisson germs $\sigma_0, \sigma_1$ at $p$, then $\sigma_0$ and $\sigma_1$ must be equivalent as holomorphic Poisson structures. 
\end{cor}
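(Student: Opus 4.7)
The plan is to reduce to the situation handled by Theorem~\ref{hamthm}, in which two holomorphic Poisson structures give generalized complex structures related by a \emph{pure} B-field gauge transformation, and then to chain the resulting biholomorphism with an auxiliary diffeomorphism that absorbs the non-gauge part of the given Courant isomorphism.

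Since the germs of $\JJ_{\sigma_0}$ and $\JJ_{\sigma_1}$ at $p$ are both isomorphic to the germ of $\JJ$ at $p$, they are isomorphic to one another as generalized complex germs. By definition, any such isomorphism is the composition of (the germ of) a diffeomorphism $f$ fixing $p$ with a B-field gauge transformation $e^B$ for some real closed 2-form $B$. The first step is to absorb $f$: pull back the data to obtain a holomorphic Poisson germ $(\tilde I_1,\tilde\sigma_1) := (f^*I_1, f^*\sigma_1)$, which is biholomorphic to $(I_1,\sigma_1)$ via $f$ and still vanishes at $p$. By construction, the associated generalized complex structure $\JJ_{\tilde\sigma_1}$ is related to $\JJ_{\sigma_0}$ purely by the B-field transformation $e^B$, with no residual diffeomorphism.

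Theorem~\ref{hamthm} now applies directly to the pair $(I_0,\sigma_0)$ and $(\tilde I_1,\tilde\sigma_1)$: it supplies a Hamiltonian flow $\varphi_t$, defined in a sufficiently small neighbourhood of $p$, whose time-one map $\varphi_1$ is a biholomorphism taking $(I_0,\sigma_0)$ to $(\tilde I_1,\tilde\sigma_1)$. Composing with $f$ then yields the desired biholomorphism from $(I_0,\sigma_0)$ to $(I_1,\sigma_1)$, establishing their equivalence as holomorphic Poisson germs.

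Essentially no genuinely new obstacle arises at this stage: the heart of the argument has been carried out in Theorem~\ref{hamthm} (and hence in Theorem~\ref{families prop}), and the only additional ingredient is the naturality of the pullback, which allows the diffeomorphism part of any Courant isomorphism to be transported to the holomorphic Poisson side as a biholomorphism, leaving the B-field piece to be dealt with by the interpolation machinery. The subtlety flagged around Equation~\ref{gageq}---that gauge equivalence of the associated generalized complex structures need not a priori imply biholomorphism of the underlying holomorphic Poisson data---is precisely what Theorem~\ref{hamthm} resolves.
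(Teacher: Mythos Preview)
Your proposal is correct and follows essentially the same approach as the paper: reduce to Theorem~\ref{hamthm} and read off the holomorphic equivalence. The only difference is that you make explicit the step of absorbing the diffeomorphism part of a general Courant isomorphism by pulling back $(I_1,\sigma_1)$ along $f$, whereas the paper's setup in Section~\ref{families of holomorphic poisson} tacitly assumes the isomorphisms with $\JJ$ are pure $B$-field gauge transformations (as provided by Theorem~\ref{bail}); your extra sentence handling $f$ is exactly what is needed to cover arbitrary Courant isomorphisms in the statement of the corollary.
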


\section{Example}

Theorem~\ref{families prop} (and thus this whole paper), having at its heart a Nash-Moser type argument, does not give a reasonable construction.  However, we can see in a concrete case the phenomenon of gauge equivalence being realized as holomorphic equivalence.

Let $w,z$ be complex coordinates for $\C^2$, and let
$$\sigma = w \del_w \^ \del_z = \left(d\log w \^ dz \right)^{-1}$$
be a holomorphic Poisson bivector, and let $Q = 2i(\sigma - \bar\sigma) = -4\Im(\sigma)$ be the corresponding gauge-invariant real Poisson bivector.  The complex structure $I$ on $\C^2$ has canonical bundle generated by $dw\^ dz$.  We define a family of real, closed 2-forms, $B_t = i t dz \^ d\bar{z}$.

  Though it is not immediately obvious that $B_t$ will transform the generalized complex structure $\JJ_{I,\sigma}$ into a family, $\JJ_{I_t,\sigma_t}$, which is itself holomorphic Poisson, we can see this by observing how this example fits into the framework of Section \ref{holomorphic from gauge section}.
  
We specify a family, $w_t,z_t$, of holomorphic coordinates defining a family, $I_t$, of complex structures.  Let $z_t = z_0 = z$ be fixed, and let $w_t = w e^{it\bar{z}}$.  We observe that
\begin{align}\label{w dot}
\dot{w}_t = i\bar{z} w_t.
\end{align}
Since $z$ and $w_t$ should be holomorphic coordinates for $I_t$, we have that ${I^*_t dz = idz}$ and
\begin{align}\label{I on w}
I^*_t dw_t = i dw_t.
\end{align}
Differentiating \eqref{I on w}, and applying \eqref{w dot},
\begin{align}
\dot{I}^*_t dw_t + I^*_t d\dot{w}_t &= i d\dot{w}_t \notag \\
\dot{I}^*_t dw_t &= (i - I^*_t ) (i\bar{z}dw_t + iw_t d\bar{z}) \notag \\
 &= -2 w_t d\bar{z}. \label{dot I in example}
\end{align}
Along with $\dot{I}^*_t dz = 0$ and the reality condition on $\dot{I}^*_t$, this determines $\dot{I}^*_t$.

We now verify that equations \eqref{varcx} and \eqref{1,1 cond} hold---these being the differential versions of the integral conditions \eqref{one} and \eqref{two}.  Equation \eqref{1,1 cond}, i.e., that $\dot{B}_t = i dz \^ d\bar{z}$ is of type $(1,1)$, is clear.  For equation \eqref{varcx}, we actually check the dual version, $\dot{I}_t^* = \dot{B}_t Q$:
\begin{align*}
\dot{B}_t Q \,dw_t &= (i dz \^ d\bar{z}) \left( 2i(\sigma - \bar\sigma) \, (dw_t) \right) \\
&= -2 (dz \^ d\bar{z}) (w \del_w \^ \del_z - \bar{w} \del_{\bar{w}} \^ \del_{\bar{z}}) (e^{it\bar{z}} dw + it w_t d\bar{z}) \\
&= -2 w_t d\bar{z} = \dot{I}^*_t dw_t,
\end{align*}
and $\dot{B}_t Q \,dz = 0 = \dot{I}^*_t dz$.  Since $I_t$, $B_t$ and $Q$ satisfy equations \eqref{varcx} and \eqref{1,1 cond}, by integrating we see that they also satisfy equations \eqref{one} and \eqref{two}.  Therefore, these data determine a family of gauge-equivalent holomorphic Poisson structures.

As in Section \ref{holomorphic from gauge section}, we take a potential function $f = z\bar{z}$, so that $\dot{B}_t = i\bar\del \del f$.  We find the corresponding real-Hamiltonian vector field:
\begin{align*}
X &= Qdf \\
&= 2i\left(w \del_w \^ \del_z - \bar{w} \del_{\bar{w}} \^ \del_{\bar{z}}\right) (zd\bar{z} + \bar{z} dz) \\
&=  -2i\left(w \bar{z} \del_w - \bar{w} z \del_{\bar{w}}\right) \\
&= 4\Im(w\bar{z} \del_w)
\end{align*}
This is precisely the vector field that generates the family of diffeomorphisms taking $w$ to $w_t = w e^{it\bar{z}}$.

\section{Analytic structure of the complex locus}\label{analytic structure}
We first recall the holomorphic version of the notion of a scheme in algebraic geometry.
\begin{defn}
A \emph{complex analytic space} is a ringed space $(X,\O_X)$ that is locally isomorphic to the zero locus of a finite set of holomorphic functions in finitely many variables, equipped with the quotient sheaf of the ideal generated by these functions.
\end{defn}

Let $(M,\JJ)$ be a generalized complex manifold, and let $X \subset M$ be its \emph{complex locus}, consisting of the points where the Poisson structure $Q$ vanishes, and hence where $\JJ$ has the form~\eqref{usuform} of a usual complex structure. 

By Theorem~\ref{bail}, each point $p\in X$ has a neighbourhood $U$ in which $\JJ$ is gauge-equivalent to a holomorphic Poisson structure $(I,\sigma)$, with $I$ a complex structure on $U$ and $\sigma$ a holomorphic Poisson structure on $U$ such that 
\[
\sigma = IQ|_U + iQ|_U.
\]
The complex locus $X_U=X\cap U$ then coincides with the vanishing locus of the holomorphic section $\sigma$, and so inherits a complex analytic space structure in which
\begin{equation}\label{locstr}
\O_{X_U} = \O_U/\I_{X_U},
\end{equation}
where $\O_U$ is the sheaf of $I$-holomorphic functions on $U$ and $\I_{X_U}$ is the vanishing ideal of $\sigma$, defined as the image sheaf of $\sigma$ acting on holomorphic two-forms:
\[
\sigma: \Omega^2_U\to \O_U.
\] 

\begin{thm}\label{schem}
The complex locus $X$ naturally inherits the structure of a complex analytic space, such that if $\JJ$ is realized locally as a holomorphic Poisson structure $\sigma$, then the complex analytic space structure on $X$ coincides with that on the vanishing locus of $\sigma$.
\end{thm}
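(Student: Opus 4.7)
The plan is to promote the local complex analytic space structures on $X \cap U$ given by~\eqref{locstr} to a well-defined global structure sheaf $\O_X$ on $X$. The essential technical input is the Hamiltonian diffeomorphism produced by Theorem~\ref{hamthm}, which furnishes the transition data relating two different local gauge realisations of $\JJ$ as a holomorphic Poisson structure.

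Fix $p \in X$ and suppose $(I_0, \sigma_0)$ and $(I_1, \sigma_1)$ are two holomorphic Poisson structures on a common neighbourhood $U$ of $p$ with $\JJ_{\sigma_0}$ and $\JJ_{\sigma_1}$ both gauge-equivalent to $\JJ|_U$. Corollary~\ref{intgau} interpolates them by a smooth family $(I_t, \sigma_t)$ of gauge-equivalent holomorphic Poisson structures, and Theorem~\ref{hamthm} realises this interpolation by the flow $\varphi_t$ of a time-dependent Hamiltonian vector field $X_t = Q df_t$ for the real Poisson structure $Q$. Since $Q$ vanishes precisely on $X$, so does $X_t$ for every $t$, and hence $\varphi_t$ fixes $X \cap U$ pointwise. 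The time-$1$ map $\varphi := \varphi_1$ is a biholomorphism $(U, I_0) \to (U, I_1)$ satisfying $\varphi^* \sigma_1 = \sigma_0$, and its pullback induces a sheaf isomorphism
\[
\varphi^* \colon \O_{U, I_1}/\I(\sigma_1) \longrightarrow \O_{U, I_0}/\I(\sigma_0)
\]
whose underlying map on $X \cap U$ is the identity. This provides the compatibility between two local complex analytic structures on $X \cap U$ needed to glue them.

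Given this compatibility, I would cover $X$ by open sets $\{U_\alpha\}$ of $M$ each carrying a local holomorphic Poisson realisation $(I_\alpha, \sigma_\alpha)$, define the structure sheaf on $X$ by declaring $\O_X|_{X \cap U_\alpha} = \O_{U_\alpha, I_\alpha}/\I(\sigma_\alpha)$, and glue using the transition isomorphisms from the preceding step; the result is locally, hence globally, a complex analytic space. The main obstacle is the cocycle condition on triple overlaps: the Hamiltonian flow produced by Theorem~\ref{hamthm} depends on the chosen interpolating family of $2$-forms, so a priori different choices yield different sheaf isomorphisms between the same pair of local models. To close this gap, one must show that any ambiguity, lying in the group of Hamiltonian Poisson automorphisms of $(I_0, \sigma_0)$ that fix $X \cap U$ pointwise, acts trivially on the quotient sheaf $\O_{U, I_0}/\I(\sigma_0)$ in a neighbourhood of $X$. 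I expect this to reduce to a direct calculation using the explicit form $\dot\varphi_t^* f = \varphi_t^*(X_t f)$ of the time derivative along the flow, recognising $X_t f$ modulo the Poisson ideal and integrating in $t$.
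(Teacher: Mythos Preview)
Your overall strategy matches the paper's: cover $X$ by local holomorphic Poisson realisations, use the Hamiltonian diffeomorphism of Theorem~\ref{hamthm} to produce transition isomorphisms of the local structure sheaves, and verify that these are canonical so that the gluing goes through. You have also correctly isolated the heart of the matter, namely that the Hamiltonian flow depends on auxiliary choices, so one must show that any two such flows induce the \emph{same} map on $\O_U/\I(\sigma)$.

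The gap is in your final sentence. Your proposed ``direct calculation'' does indeed show that the difference $\Delta h = \tilde\varphi_1^{\prime *}h - \tilde\varphi_1^*h$ lies in the image of $i_Q$, i.e.\ in the ideal generated by the components of $Q$ in the ring of \emph{smooth} complex-valued functions. But this is not yet the holomorphic vanishing ideal $\I(\sigma_0)\subset\O_{U,I_0}$: the Hamiltonians $f_t$ are real, the intermediate flows $\tilde\varphi_t$ are not $I_0$-holomorphic, and so the integrand is only smooth. What you need is that a function which is $I_0$-holomorphic and lies in the smooth ideal of $Q=\Im(\sigma_0)$ actually lies in the \emph{holomorphic} ideal of $\sigma_0$. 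This passage is not a direct calculation; the paper accomplishes it in two nontrivial steps. First, since the smooth ideal of $Q$ equals the smooth ideal generated by $\sigma_0$ and $\bar\sigma_0$, Malgrange's theorem on ideals of differentiable functions is invoked to conclude that the holomorphic function $\Delta h$ lies in the smooth ideal of $\sigma_0$ alone. Second, one still has to descend from the smooth (equivalently, formal) ideal of $\sigma_0$ to the ideal in the ring of convergent holomorphic power series; the paper uses Krull's theorem $\sr{R}\cap(\sr{I}\hat{\sr{R}})=\sr{I}$ for this. Without these two ingredients your argument does not conclude, and neither is obtainable by a routine computation with the flow equation.

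Once independence is established in this way, the cocycle condition follows as you anticipate: the composition of two Hamiltonian flows is again Hamiltonian, so $\tilde\psi_1\circ\tilde\varphi_1$ is one admissible representative of $\varphi_{W,U}$, and independence forces it to agree with any other.
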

\begin{proof}
We demonstrate this by showing that the structure sheaf~\eqref{locstr} is independent of the choice of local realization of $\JJ$ as a holomorphic Poisson structure.  More precisely, we show that if $U$ and $V$ are neighbourhoods as above, in which $\JJ$ is gauge equivalent to the holomorphic Poisson structures $(I,\sigma)$, $(J, \tau)$ respectively, with corresponding structure sheaves $\O_{X_U}, \O_{X_V}$ as in~\eqref{locstr}, then there is a canonical sheaf isomorphism 
\[
\varphi_{V,U}:\O_{X_U}|_{U\cap V} \to \O_{X_V}|_{U\cap V},
\]
which satisfies the gluing condition
\begin{equation}\label{cocycl}
\varphi_{W,V}\circ \varphi_{V,U} = \varphi_{W,U}
\end{equation}
for any triple of neighbourhoods $U,V,W$ as above.  
Finally, we may cover $X$ by open sets $\{U_i\}$ of the above form, and apply the gluing theorem for sheaves \cite[\href{http://stacks.math.columbia.edu/tag/00AK}{\S 6.33}]{stacks-project}
to the local gluing data $\{\O_{X_{U_i}},\varphi_{U_j,U_i}\}$ to obtain the required structure sheaf $\O_X$ of the complex locus.

\vspace{1ex}

To construct $\varphi_{V,U}$, let $p\in U\cap V$, and let $\tilde\varphi_p$ be a Hamiltonian diffeomorphism, as in Theorem~\ref{hamthm}, which defines an isomorphism of holomorphic Poisson structures from $(I,\sigma)|_{U_p}$ to $(J,\tau)|_{\varphi_p(U_p)}$, where $U_p\subset U\cap V$ is a neighbourhood of $p$. Recall that $\tilde\varphi_p$ is the time-1 flow of a Hamiltonian vector field for the real Poisson structure $Q$, hence it fixes $X_{p} = X\cap U_p$ pointwise, and since it takes $\sigma$ to $\tau$, it induces an isomorphism of sheaves
\[
\varphi_p: \O_{X_U}|_{X_{p}}\to \O_{X_V}|_{X_{p}}. 
\]  

\vspace{1ex}

We now prove that $\varphi_p$ is independent of the particular Hamiltonian flow $\tilde\varphi_p$ used to interpolate between $(I,\sigma)$ and $(J,\tau)$.  Indeed, let $\tilde\varphi_t, \tilde\varphi_t'$ ($t\in[0,1]$) be two such flows, generated by time-dependent Hamiltonians $f_t, f'_t$ respectively, and defined in a neighbourhood $U_p\subset U\cap V$ of $p$.  If $h_0$ is an $I$-holomorphic function on $U_p$, then the resulting pullbacks differ by
\[
\begin{aligned}
\Delta h_1 = {{\tilde\varphi}_1}^{\prime *} h - {\tilde\varphi}_1^* h &= \int_0^1 \left(L_{X_{f'_t}} (\tilde\phi_t^{\prime *} h_0) -L_{X_{f_t}} (\tilde\phi_t^* h_0) \right)dt\\
&= i_Q\int_0^1\left(df'_t\wedge d\tilde\phi_t^{\prime *} h_0 - df_t\wedge d\tilde\phi_t^* h_0\right)dt.
\end{aligned}
\]
Therefore, $\Delta h_1$ lies in the vanishing ideal of $Q$ in the smooth complex-valued functions on an open neighbourhood of $X_p = U_p\cap X$.  Since $Q = \Im(\sigma)$, this vanishing ideal coincides with the ideal generated by $\sigma$ and $\bar\sigma$, and we may apply Malgrange's criterion~\cite[VI, Theorem 1.1.]{MR0212575} for ideal membership to deduce that the holomorphic function $\Delta h_1$ lies in the vanishing ideal generated by $\sigma$ alone (in the smooth functions).  To complete the argument, we must show that $\Delta h_1$ lies in the ideal of $\sigma$ in the holomorphic functions, so that ${{\tilde\varphi}_1}^{\prime *} h$ and ${\tilde\varphi}_1^* h$ coincide in the structure sheaf $\O_{X_V}|_{X_p}$, proving that the induced map $\varphi_p$ is independent of the chosen flow.

Let $\sr{I}=(\sigma)$ be the vanishing ideal of $\sigma$ in the ring $\sr{R}$ of convergent holomorphic power series centered on $p$.  Since $\Delta h_1$ is in the smooth ideal generated by $\sigma$, its Taylor series about $p$ lies in the ideal $\sr{I}\hat{\sr{R}}$ generated by $\sigma$ in the completion of $\sr{R}$, i.e. the ring $\hat{\sr{R}}$ of formal power series in holomorphic coordinates centered on $p$.  
By Krull's theorem~\cite[IV,\S 7.]{MR0090581}, we have the identity
\[
\sr{R}\cap (\sr{I}\hat{\sr{R}}) = \sr{I}, 
\]
proving that $\Delta h_1$ lies in the holomorphic vanishing ideal of $\sigma$ near each point of $U\cap V$, as required.
  
The same argument may be used to show that isomorphisms $\varphi_p, \varphi_q$ defined as above in neighbourhoods $X_p, X_q$ of points $p,q\in X$ actually coincide on $X_p\cap X_q$, and so glue together to define the required sheaf isomorphism $\varphi_{V,U}$. 

\vspace{1ex}

It remains to verify the cocycle condition~\eqref{cocycl}. By the results above, in a sufficiently small neighbourhood of $p\in U\cap V\cap W$, we may express $\varphi_{V,U}$ and $\varphi_{V,W}$ as morphisms induced by Hamiltonian flows: let $\tilde\varphi_t$ be the Hamiltonian flow of $f_t$ which takes $(I,\sigma)$ to $(J,\tau)$ after unit time, and let $\tilde\psi_t$ be the flow of $g_t$, which similarly takes $(J,\tau)$ to $(K,\mu)$. The composition of flows $\tilde\rho_t =\tilde\psi_t \circ\tilde\varphi_t$ is also a $Q$-Hamiltonian flow, for the time-dependent Hamiltonian 
\[
h_t = g_t + (\psi_t)_* f_t.
\]
By definition, the unit time flows satisfy  $\tilde\rho_1 =\tilde\psi_1 \circ\tilde\varphi_1$, and so we obtain $\rho_p=\psi_p\circ\varphi_p$ for the induced sheaf isomorphisms from $\O_{X_U}$ to $\O_{X_W}$ in a sufficiently small neighbourhood of each point $p\in U\cap V\cap W$, as required.
% Using the path of complex structures $I'_t = \rho(t)_* I$, we define the closed 2-form
% \[
% F_t = \int_0^t (dd^c_{I'_s}h_s) ds,
% \]
% so that the path of Courant symmetries $(\rho(t), F_t)$ interpolates between the holomorphic Poisson structures $(I,\sigma)$ and $(K,\mu)$.  Flowing for unit time, we see that \\
% Use the notation $I_{t,s} = (\psi(s)\varphi(t))_* I$, so that $I_{0,0} = I$ and $I_{1,1} = K$.  This implies that 
% \[
% \begin{aligned}
% I_{t,0} - I &= Q F_{t,0}\\
% I_{t,s} - I_{t,0} &= Q G_{t,s},
% \end{aligned}
% \]
% where the closed 2-forms $F,G$ are given by  
% \[
% F_{t,0} = \int_0^t (dd^c_{I_{u,0}} f_u) du,\qquad {G}_{t,s} = \int_0^s (dd^c_{I_{t,u}} g_u) du.
% \]
% As a result, we have that 
% \[
% I_{t,t} - I = QH_t,\qquad H_t = G_{t,t} + F_{t,0}.
% \]
% We now choose a smooth function $h_t$ {\color{red} dependence on t?} such that 
% \[
% dd^c_{I_{t,t}}h_t = dd^c_{I_{t,0}} f_t+dd^c_{I_{t,t}} g_t.
% \]
% The function $h_t$ will then\footnote{Could possibly cite my branes paper for this result - groupoid part.} generate a Hamiltonian flow taking $(I,\sigma)$ to $(K,\mu)$, and this flow coincides with the composition $\psi(t)\varphi(t)$, showing that the composition $\phi(1)\varphi(1)$ of the flows used to define $\varphi_{W,V}$ and $\varphi_{V,U}$ coincides with a flow which induces $\varphi_{W,U}$, proving the cocycle condition.
\end{proof}

\pagebreak
\appendix

\section{Normal form for families of generalized complex structures}\label{SCI section}

The purpose of this appendix is to sketch the proof of Theorem
\ref{families prop}, which is a parametrized version of Theorem
\ref{bail}.  We briefly review the methods used in the proof of Theorem
\ref{bail} and discuss how the argument passes to families.  We shall not reproduce
technical details which can be found in \cite{Bailey}.

\subsection{The SCI framework and the normal form lemma}

The proof of Theorem \ref{bail} in \cite{Bailey} uses a general
technical lemma which enables one to show that a given geometric
structure is equivalent to one in ``normal form'' in a suitably small
neighbourhood of a point $p$.
The lemma, \cite[Theorem 4.17]{Bailey}, is an extension of the results
of Miranda, Monnier and Zung \cite{MR2855089}, which encapsulate a
technique used by Conn \cite{MR794374} in the linearization of Poisson
structures, which was itself a version of the Nash-Moser fast
convergence technique adapted to spaces of local sections about a
point.

To do this, one shows that there is a local automorphism of the space
which takes the original structure to one which is
\emph{approximately} in normal form; by iterating this approximation
one finds, in the limit, a local automorphism taking the original
structure to one precisely in normal form.  To establish the limit one
uses the technique of Nash and Moser \cite{MR656198}, except that one
must take care of the fact that, since one is working in
neighbourhoods of $p$, at each step the automorphism may necessitate
restricting to a smaller neighbourhood.  With suitable
estimates, one controls how quickly the neighbourhood shrinks in this
iteration, and shows that in the limit one gets a neighbourhood of
positive radius.  This innovation is due to Conn \cite{MR794374}.

Miranda, Monnier and Zung develop the framework of \emph{SCI-spaces},
short for ``scaled $C^\infty$'' spaces.  We briefly summarize the
framework and the lemma, focusing on the modifications required 
for the generalization to families.
% including only enough detail to satisfy the
% reader that our generalization to families is valid.  
For full details, consult \cite[Section 4]{Bailey} and
\cite{MR2855089}.  For the general theory of tame Fr\'{e}chet spaces,
smoothing operators and the Nash-Moser technique, consult
the notes of Hamilton \cite{MR656198}.

\subsubsection{SCI-spaces}

An \emph{SCI-space} $\sr{V}$ is a radius-parametrized collection of
\emph{tame Fr\'{e}chet spaces}.  That is, for each $r \in (0,1]$ there
is a Fr\'{e}chet space $\sr{V}_r$ with a nondecreasing sequence of
norms $\|\cdot\|_{0,r}\, ,\, \|\cdot\|_{1,r}\, ,\, \|\cdot\|_{2,r}\,
,\, \ldots$ and \emph{smoothing operators} $S_r(t)$ for real $t>1$;
the smoothing operators must satisfy certain well-known estimates.
Furthermore, there is a \emph{radius restriction map} from $\sr{V}_r$
to $\sr{V}_{r'}$ whenever $r \geq r'$, and all diagrams of restriction
maps commute.  Using these restrictions, we may
identify a vector $v$ in $\sr{V}_r$ with its preimages at larger
radii.  Finally, we impose the condition that the norms $\|v\|_{k,r}$
are nondecreasing in $r$ (as well as in $k$).  When the radius is
clear from context, we often omit it and simply write $\|v\|_k$.

The prototypical example of an SCI-space is given by the local sections of a
vector bundle $V$ with connection about a point $p$ in a Riemannian
manifold.  Each $\sr{V}_r$ consists of the smooth sections of $V$
restricted to a closed ball of radius $r$ centered at $p$, equipped
with the usual $C^k$ norms.  A typical construction of smoothing operators $S(t)$ on, eg., the space of smooth, compactly supported functions on $\R^n$ is to Fourier transform the function, remove frequencies higher than $\tfrac{1}{t}$ by multiplying by a cutoff function, and then transforming back to position space.  Such operators can be transferred to the space of sections of a vector bundle on a manifold through the use of embeddings.

% A map between Fr\'{e}chet spaces is \emph{tame} when it is continuous
% from the $k$-norm to the $k-d$-norm   
% and there are standard constructions of smoothing operators.  The
% proof of Theorem \ref{bail} in \cite{Bailey} used the SCI-space of
% sections of the generalized complex deformation bundle for a complex
% structure near a point.

\subsubsection{SCI-groups and actions}

% We have a notion of a group-like structure modelled on an SCI-space.
An \emph{SCI-group} $\sr{G}$ is an SCI-space $\sr{W}$ intended to
model local diffeomorphisms; $\sr{W}$ is equipped with an associative
partial composition as well as an identity element $\Id$.  Whenever
$\|\phi-\Id\|_{1,r}$ and $\|\psi-\Id\|_{1,r}$ are small enough (where
the bound depends on $r$), the product $\phi\cdot\psi$ is well-defined
(at a certain radius $r'<r$).  The product commutes with restriction,
and inverses exist, but once again only at a smaller
radius, and only if $\|\phi-\Id\|_{1,r}$ is small enough.  Finally, the
product and inverse operations must satisfy certain norm estimates
(given in \cite{Bailey}).
% We remark that, though $\sr{W}$ models a ``neighbourhood of the
% identity'' in $\sr{G}$, in no way should it be interpreted as the
% Lie algebra for $\sr{G}$.

A typical example of an SCI-group is given by the local
diffeomorphisms fixing the origin in $\R^n$.  In this case, we may
take $\sr{W}_r$ to be the smooth functions $\chi$ from the closed ball
of radius $r$ to $\R^n$ which vanish at the origin and are such that
$\phi = \Id + \chi$ is a local diffeomorphism.  
The example which we use in the proof of
Theorem \ref{bail} is the SCI-group of local Courant automorphisms,
which consist of diffeomorphisms composed with $B$-field gauge
transformations.

There is also a notion of the action of an SCI-group on an SCI-space,
with a similar accounting for radius restrictions, and also satisfying
tameness estimates.  The typical example of an SCI-action is the
action of local diffeomorphisms on local tensor fields by pushforward
or pullback.  In the proof of Theorem \ref{bail}, we use the action of
local Courant automorphisms on local deformations of generalized
complex structure.

\subsubsection{The normal form lemma}

We now present an outline of the main lemma \cite[Theorem
4.17]{Bailey}, including the intended interpretations of the spaces
and maps involved. The lemma is applied in a context where the
geometric structures in question are described as sections of a vector
bundle satisfying an integrability condition such as the Maurer-Cartan
equation. We refer to such sections as \emph{pre-integrable} if we do
not impose the integrability condition. For simplicity, we assume that
the desired normal form of the geometric structure may be expressed as
a constraint on the pre-integrable section, followed by imposing the
integrability condition.  Note that we are using ``normal form'' to
mean a geometric structure satisfying a constraint, rather than having
a fixed representation in local coordinates.

\begin{lem}\label{normal form lemma}
Suppose we have the following SCI-spaces:
\begin{itemize}
\item[--] $\sr{T}$ (the pre-integrable geometric structures),
\item[--] $\sr{F} \subset \sr{T}$ (the pre-integrable structures in normal form),
\item[--] $\sr{I} \subset \sr{T}$ containing $0$ (the integrable structures),
\item[--] $\sr{N} = \sr{F} \cap \sr{I}$ (the integrable structures in normal form), and
\item[--] $\sr{V}$ (the infinitesimal automorphisms),
\end{itemize}
and let $\sr{G}$ be an SCI-group (the local automorphisms) which acts
on $\sr{T}$, preserving $\sr{I}$.  Let $\pi : \sr{T} \to \sr{F}$ be a
projection, and define $\zeta = \Id-\pi$, which measures the
failure to be in normal form.  Suppose we have maps
$$\sr{I} \oto{V} \sr{V} \oto{\Phi} \sr{G},$$
where $V$ provides an infinitesimal automorphism whose time-1 flow,
given by $\Phi$, should bring a given structure closer to normal form.

Suppose furthermore that these maps satisfy the set of estimates given
in \cite[Theorem 4.17]{Bailey}, including in particular that there is
some $\delta>0$ and $s\in\mathbb{N}$ such that, for any $\epsilon \in
\sr{I}$,
\begin{align}\label{quadratic estimate}
  \left\|\zeta\left(\Phi_{V(\epsilon)}\cdot\epsilon\right)\right\|_k
  \leq \|\zeta(\epsilon)\|^{1+\delta}_{k+s} E
  \left(\zeta(\epsilon),\epsilon , \Phi_{V(\epsilon)}-\Id
    \right),
\end{align}
where $E$ is a polynomial in the $(k+s)$-norms of its arguments with
positive coefficients.

Then there exists $l \in \N$ and real positive constants $\alpha,\beta$, such
that for any $\epsilon \in \sr{I}_R$, if $\|\epsilon\|_{2l-1,R} <
\alpha$ and $\|\zeta(\epsilon)\|_{l,R} < \beta$ then there exists
$\psi \in \sr{G}$ such that $\psi \cdot \epsilon \in \sr{N}_{R/2}$.
\end{lem}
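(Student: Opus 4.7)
The plan is to construct, by a Nash--Moser fast-convergence iteration, a sequence of automorphisms in $\sr{G}$ whose composition conjugates $\epsilon$ into $\sr{N}$. Setting $\epsilon_0 = \epsilon$ on the ball of radius $R_0 = R$, I would define $\psi_n = \Phi_{V(\epsilon_n)}$ and $\epsilon_{n+1} = \psi_n \cdot \epsilon_n$ on a nested sequence of radii $R_0 > R_1 > \cdots$ with decrements $R_n - R_{n+1}$ summing to less than $R/2$, so that $R_n \downarrow R/2$. Because $\sr{G}$ preserves $\sr{I}$ by hypothesis, each $\epsilon_n$ remains in $\sr{I}$; the task is then to show that $\zeta(\epsilon_n) \to 0$ and that the composite automorphisms $\Psi_n = \psi_{n-1}\circ\cdots\circ\psi_0$ converge in $\sr{G}_{R/2}$.

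The heart of the argument is a coupled inductive estimate at each step on a low-norm decay quantity $a_n = \|\zeta(\epsilon_n)\|_{l,R_n}$ and a high-norm bound $b_n = \|\epsilon_n\|_{2l-1,R_n}$. The estimate \eqref{quadratic estimate} gives, schematically, $a_{n+1} \lesssim a_n^{1+\delta}\, P(b_n)$; this alone would require control of high norms of $\epsilon_n$ along the iteration, at the loss of $s$ derivatives per step. The standard Nash--Moser remedy is to pre-smooth $V(\epsilon_n)$ via the SCI smoothing operators $S_{R_{n+1}}(t_n)$ with $t_n$ growing geometrically, say $t_n = t_0 \kappa^n$ for some $\kappa>1$. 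Interpolation between $l$- and $(2l-1)$-norms then trades a factor $t_n^s$ against a small loss in the decay exponent, yielding an inequality of the form $a_{n+1} \le C t_n^s a_n^{1+\delta/2}$; provided $l$ is chosen sufficiently large relative to $s$, this can be closed inductively to give $a_n \le \theta^{(1+\delta/4)^n}$ for some $\theta<1$, as long as the initial bounds $\|\epsilon\|_{2l-1,R} < \alpha$ and $\|\zeta(\epsilon)\|_{l,R}<\beta$ are tight enough. A parallel interpolation, using the tameness estimates on the action of $\sr{G}$, shows that $b_n \le 2\alpha$ is preserved throughout.

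Convergence of $\Psi_n$ in $\sr{G}_{R/2}$ follows from the tame estimate on $\Phi$ built into the hypotheses: since $V$ vanishes on $\sr{F}$, $\|\psi_n - \Id\|_{1,R_{n+1}}$ is controlled by $a_n$, and the super-geometric decay of $a_n$ makes the partial products Cauchy in the Fr\'echet space $\sr{G}_{R/2}$. Passing to the limit yields $\psi \in \sr{G}_{R/2}$ with $\psi \cdot \epsilon = \lim_n \epsilon_n$; since $\zeta(\epsilon_n) \to 0$ and each $\epsilon_n$ lies in $\sr{I}$, the limit belongs to $\sr{F} \cap \sr{I} = \sr{N}$, as required.

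The hardest part is the coherent bookkeeping of three interlocking behaviors under a single choice of parameters: the low norm $a_n$ must decay super-geometrically, the high norm $b_n$ must stay bounded, and the radii $R_n$ must remain above $R/2$ despite the repeated domain shrinkage forced by composition in $\sr{G}$. Pinning down the thresholds for $l$, $\alpha$, $\beta$ in terms of $\delta$, $s$, and the polynomial $E$ of \eqref{quadratic estimate}, and verifying the interpolation estimates that close the induction, are routine but technically delicate; following the convention of this appendix, I would defer these calculations to \cite[Theorem 4.17]{Bailey} rather than reproduce them.
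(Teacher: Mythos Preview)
Your proposal is correct and follows precisely the Nash--Moser strategy that the paper itself indicates: the paper does not prove this lemma but states it as an outline of \cite[Theorem 4.17]{Bailey}, with the subsequent remark sketching exactly the iteration-with-smoothing mechanism you describe. Your sketch is in fact more detailed than what the paper provides, and your decision to defer the bookkeeping of thresholds and interpolation inequalities to \cite[Theorem 4.17]{Bailey} matches the paper's own convention.
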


\begin{rem}
  The omitted estimates relate to the Fr\'echet \emph{tameness} of the various
  maps.  We highlight estimate \eqref{quadratic estimate} because it
  expresses the key fact that the iteration
$$\epsilon \mapsto \Phi_{V(\epsilon)} \cdot \epsilon$$
shrinks the error ``quadratically'' (strictly speaking, by the power
$(1+\delta)$).  The estimates, including \eqref{quadratic estimate},
have the property that some derivatives are lost, i.e., the right hand
sides involve higher derivative norms.  To ensure convergence in spite
of this problem, the naive iteration is modified as per Nash-Moser by
applying smoothing operators to $V(\epsilon)$ at each stage.
\end{rem}

\subsubsection{Application to generalized complex structures}\label{gcsnorm}
We now describe how Lemma~\ref{normal form lemma} is used
to prove Theorem~\ref{bail}. 

Given a point of complex type on a generalized complex manifold, a
scaling argument~\cite[Section 7]{Bailey} shows that this point has
neighbourhoods in which the generalized complex structure is
equivalent to arbitrarily small generalized complex deformations of
the standard complex structure on some ball $B_r\subset \C^n$, with
the property that the deformation is trivial at the center of the ball.

To establish Theorem~\ref{bail}, it remains to show that an
arbitrarily small such deformation of $B_r\subset \C^n$ is equivalent
(on a possibly smaller ball) to a holomorphic Poisson structure.  It
is at this point that the Nash--Moser iteration scheme on SCI spaces
is invoked~\cite{Bailey}.  A generalized complex deformation
$\epsilon$ of the usual complex structure on $\C^n$ is a section with
three components \cite[Section 2.2]{Bailey}:
\begin{align*}
\epsilon^{2,0} &\in C^\infty( \wedge^2 T_{1,0}), \\
\epsilon^{1,1} &\in C^\infty( T_{1,0} \otimes T^*_{0,1} ), \\
\epsilon^{0,2} &\in C^\infty(\wedge ^2 T^*_{0,1} ),
\end{align*}
satisfying a Maurer-Cartan equation which generalizes the one
governing deformations of complex structure.  
The relevant SCI spaces,
defined on closed balls $B_r$ centered at $0\in\C^n$, are defined as
follows:
\begin{itemize}
\item[--] $\sr{T}$ consists of pre-integrable deformations $\epsilon$ of the above form.
\item[--] $\sr{I}\subset\sr{T}$ consists of the integrable (Maurer-Cartan) deformations.
\item[--] $\sr{F}$ are pre-integrable deformations of bivector type, i.e. with vanishing  $\epsilon^{1,1}$ and $\epsilon^{0,2}$, so that $\sr{N}=\sr{F}\cap\sr{I}$ are holomorphic Poisson tensors.
\item[--] $\sr{G}$ are Courant automorphisms fixing $0\in\C^n$.
\item[--] $\sr{V}$ are the generalized vector fields $C^\infty(T\oplus T^*)$, and $\Phi$ is their time-1 flow, described in~\cite[Section 2.3]{Bailey}.
 \end{itemize}

% We will sketch how this theorem was applied in the proof of Theorem
% \ref{bail}.  $\sr{T}$ consisted of the generalized complex
% deformations of the complex structure on closed balls $B_r$ about $0
% \in \C^n$, $\sr{I} \subset \sr{T}$ was the integrable (Maurer-Cartan)
% deformations, $\sr{F}$ was the purely Poisson deformations, and thus
% $\sr{N}=\sr{F}\cap\sr{I}$ was the holomorphic Poisson deformations.
% $\sr{G}$ was the local generalized diffeomorphisms fixing $0 \in
% \C^n$, $\sr{V}$ was the ``generalized vector fields'' $C^\infty(TB_r
% \dsum T^*B_r)$, and $\Phi_\bullet$ was the time-1 flow of these.

% A generalized complex deformation $\epsilon$ of $\C^n$ may be split into three parts \cite[Section 2.2]{Bailey},
% \begin{align*}
% \epsilon^{2,0} &\in C^\infty(\^ ^2 T^{1,0} B_r) \\
% \epsilon^{1,1} &\in C^\infty \left( T^{1,0} B_r \^ (T^*)^{0,1} B_r \right) \\
% \epsilon^{0,2} &\in C^\infty(\^ ^2 (T^*)^{0,1} B_r)
% \end{align*}
 The projection map $\pi:\sr{T}\to \sr{F}$ is defined by
 $\epsilon\mapsto \epsilon^{2,0}$, and so
 $\zeta(\epsilon)=\epsilon^{1,1}+\epsilon^{0,2}$. On $B_r$, the
 Dolbeault complex admits a homotopy operator $P$, so that $\Id = P
 \comp \bar\del + \bar\del \comp P$.  This may be used to define the
 infinitesimal correction
\begin{align}\label{correction formula}
V(\epsilon) = P\left(\left[\epsilon^{2,0},P\epsilon^{0,2}\right] - \epsilon^{1,1} - \epsilon^{0,2}\right),
\end{align}
where the bracket in \eqref{correction formula} is the natural
extension of the Schouten bracket to the Dolbeault complex with
coefficients in holomorphic multivector fields.  One then shows
\cite[Lemma 6.11]{Bailey} that the key estimate~\eqref{quadratic
  estimate} holds.

% In the limit where the error $\zeta(\epsilon) = \epsilon^{1,1} +
% \epsilon^{2,2}$ is small, the infinitesimal action of $V(\epsilon)$ on
% $\epsilon$ approximately cancels $\epsilon^{1,1}$ and
% $\epsilon^{2,2}$ \cite[Lemma 6.11]{Bailey}.

Given that this and the other tameness estimates are satisfied,
Lemma~\ref{normal form lemma} implies that for any sufficiently small
generalized complex deformation of $B_r \subset \C^n$ which vanishes
at $0$, there is a neighbourhood of zero in which it is equivalent, by
a Courant automorphism, to a holomorphic Poisson structure.

% As shown in \cite[Section 7]{Bailey} by a scaling argument, a point of
% complex type in a generalized complex manifold always has
% neighbourhoods equivalent to arbitrarily small deformations of
% $B_r\subset \C^n$, so that Theorem \ref{bail} follows.

\subsection{Application to families of generalized complex structures}\label{families algorithm}

We now explain how to extend the results of Section~\ref{gcsnorm} in
such a way that a smooth family of generalized complex structures
parametrized by $S=[0,1]$ which have complex type at a point $p\in M$
is seen to be equivalent, in a sufficiently small neighbourhood of
$p$, to a smooth family of holomorphic Poisson structures, proving
Theorem~\ref{families prop}.  We are particularly interested in the
case where the given family of structures is already of Poisson type
at the boundary $\{0,1\}$ of the parameter space, in which case we
want these to be fixed by the equivalence.

\vspace{.4em}
\noindent{\bf Working in families}
\vspace{.4em}

%
%Given a family of generalized complex structures parametrized by $S=[0,1]$ which have complex type at a point $p \in M$, we first apply a it may be the case that the induced complex structures at $p$ differ among the family.  Nonetheless, we may rectify this difference by applying a family of local diffeomorphisms about $p$ so that all induced complex structures at $p$ are the same, and with a family of $B$-transforms we can ensure furthermore that all the generalized complex structures at $p$ are the same.  In this way, and by choosing coordinates, without loss of generality we may consider only the case where $M = \C^n$ and where every generalized complex structure in the family agrees at $p = 0 \in C^n$ with the standard generalized complex structure on $\C^n$ .

Let $X = \C^n\times S$, with projection $\pi:X\to S$ to the parameter
space $S=[0,1]$, and let $V = \ker \pi_*$ be the relative tangent
bundle. To describe geometric structures parametrized by $S$, we use the
relative Courant algebroid $V\oplus V^*$ over $X$.  A family of
generalized complex structures parametrized by $S$ is a complex
structure on the bundle $V\oplus V^*$ which is integrable with respect
to the vertical Courant bracket. 

The given family of generalized complex
structures defines precisely such a structure in a neighbourhood of
the zero section $\{0\} \times X$.  The relevant symmetries for such families
are the relative Courant automorphisms, generated by diffeomorphisms
$\varphi$ of $X$ such that $\pi\varphi = \pi$, together with
$B$-field transformations by relatively closed vertical 2-forms.

We first apply a symmetry to ensure that the family of generalized complex structures is constant along the zero section $\{0\}\times X$, and agrees with the standard complex structure on $\C^n$ along this locus.  
Then the same scaling argument from \cite[Section 7]{Bailey}, applied
vertically to the family $X\to S$, shows that any family of
generalized complex structures parametrized by $S=[0,1]$, each member of
which agrees at $p$ with the standard structure on $\C^n$, there exists a tubular neighbourhood, $B_r\times S$, of
$\{0\}\times S$ ($B_r \subset \C^n$ being a closed ball about $0$) on which it is equivalent to an arbitrarily small family, $\epsilon$, of generalized complex deformations of the constant family of standard complex structures
on $\C^n$.  Furthermore, $\epsilon$ vanishes along the zero section $\{0\}\times S$.  

\vspace{.4em}
\noindent{\bf SCI spaces for families}
\vspace{.4em}

As in Section~\ref{gcsnorm}, what remains is to show that an
arbitrarily small such deformation is equivalent (on a possibly
smaller tubular neighbourhood) to a family of holomorphic Poisson
structures.  We now explain how the Nash--Moser iteration scheme on
SCI spaces can be adapted to this situation.

The SCI spaces used in Lemma~\ref{normal form lemma} were constructed
as spaces of sections of vector bundles over closed balls $B_r\subset
\C^n$.  To work in families over $S$, we pull back these vector
bundles to $X_r=B_r\times S$ and consider their sections over the
total space.  To ensure that our families are smooth, we use $C^k$
norms over the total space $X$, rather than only over $B_r$ as done in
\cite{Bailey}.  We set up the SCI spaces as follows, with restrictions
to $X_r$ understood:
\begin{itemize}
\item[--] $\sr{T}$ consists of relative pre-integrable deformations 
  \begin{equation*}
    \epsilon\in C^\infty(X, \wedge^2 (V_{1,0}\oplus V^*_{0,1})),
  \end{equation*}
  where $V_{1,0}$ and $V_{0,1}$ are the $+i$ and $-i$ eigenbundles,
  respectively, of the standard complex structure on the fibres
  of $\pi$.
\item[--] $\sr{I}\subset\sr{T}$ are the above sections which satisfy
  the Maurer-Cartan equation.
\item[--] $\sr{F}$ are pre-integrable relative deformations of
  bivector type, with $\epsilon^{1,1}=\epsilon^{0,2}=0$, so that
  $\sr{N}=\sr{F}\cap\sr{I}$ are smooth families of holomorphic Poisson
  tensors,
\item[--] $\sr{G}$ are the local relative Courant automorphisms fixing
  $\{0\}\times S\subset X$ and which are trivial on the restriction of $V\oplus V^*$ to this locus,
\item[--] $\sr{V}$ are the infinitesimal relative symmetries given by
  sections $C^\infty(V\oplus V^*)$, and $\Phi$ is their time-1 flow.
 \end{itemize}
 The projection map $\pi:\sr{T}\to\sr{F}$ is defined as before, and
 the infinitesimal correction operator $V:\sr{T}\to\sr{V}$ is 
 defined by~\eqref{correction formula}, where we view $P$ as the homotopy
 operator for the vertical Dolbeault complex.

 In order to study families of deformations which are already in
 normal form at the boundary of the parameter space, we must consider
 the following subspaces of the SCI spaces defined above. First we let
 $\sr{T}_\del\subset \sr{T}$ be the deformations $\epsilon$ for which
 the error $\zeta(\epsilon)=\epsilon^{1,1} + \epsilon^{0,2}$ vanishes
 on $B_r\times \del S$.  The appropriate symmetries in this case form
 an SCI subgroup $\sr{G}_\del\subset \sr{G}$ of automorphisms which
 restrict to the identity at $B_r\times\del S$.  The corresponding
 infinitesimal symmetries are then the sections
 $\sr{V}_\del\subset\sr{V}$ which vanish on $B_r\times\del S$.  The
 maps $\Phi$ and $V$ defined above have well-defined restrictions to
 these subspaces, defining maps
\begin{equation*}
\sr{I}_\del \oto{V} \sr{V}_\del \oto{\Phi} \sr{G}_\del.
\end{equation*}
The Nash--Moser iteration requires smoothing operators for the SCI
spaces defined above.  To obtain these for the manifold with boundary
$X$, we may apply standard ``doubling'' arguments (see
\cite[II.1.3]{MR656198}).  So, in order to carry over the results of
Lemma \ref{normal form lemma}, we need only check the required
estimates for the various maps defined above. After this verification
(which we outline in Section~\ref{estim}), we may conclude that the
iteration provides a smooth family of automorphisms rendering each of
the given generalized complex structures into a holomorphic Poisson
structure, establishing Theorem~\ref{families prop}.

% \begin{thm}[Thm. \ref{families prop}]
%   Let $\JJ_t$, $t\in [0,1]$ be a smooth family of generalized complex
%   structures on a manifold $M$ that are all of complex type at the
%   point $p\in M$.  Then, in a neighbourhood of $p$, there is a smooth
%   family of gauge transformations by closed 2-forms $B_t$ which
%   renders $\JJ_t$ isomorphic to a smooth family of holomorphic Poisson
%   structures $(I_t,\sigma_t)$ with $\sigma_t(p)=0$ for all~$t$.

%   Furthermore, if $\JJ_0$ and $\JJ_1$ are already of holomorphic
%   Poisson type~\eqref{holpo}, then the family $B_t$
%   may be chosen such that $B_0=B_1=0$.
% \end{thm}

\subsubsection{Estimates}\label{estim}

The proof of Theorem~\ref{bail} involves two groups of estimates on
$C^k$ norms of tensors over $B_r\subset \C^n$. The first group of
estimates, given in Lemmas 5.6--5.12 in \cite{Bailey}, establish that
the local Courant automorphisms $\sr{G}$ form an SCI group, and that
their action on the deformations $\sr{T}$ defines an SCI group
action. By the same arguments presented in~\cite{Bailey}, the same is
true for relative Courant automorphisms and families of deformations.
The second group of estimates, given in Lemmas 6.1--6.7 in
\cite{Bailey}, establish the necessary properties of the maps
$V,\zeta,\Phi$ among the SCI spaces. This last group includes the key
estimate~\ref{quadratic estimate}. To establish these for families
requires straightforward modifications to take into account
derivatives in all directions in the total space $X = B_r \times S$
rather than just the vertical ones.  To illustrate this we provide an
example, showing how \cite[Lemma 6.2]{Bailey} is extended to the
families setting.

Let $\JJ$ be an involutive complex structure on $V\oplus V^*$ over a
neighbourhood of the zero section in $X$, representing a family of
generalized complex structures near the origin in $\C^n$ parametrized
by $S$, and let $L\subset (V\oplus V^*)\otimes\C$ be its $+i$
eigenbundle, which is a Lie algebroid using the Courant bracket.  To
extend~\cite[Lemma 6.2]{Bailey}, we need a bound for the induced
Schouten bracket on sections of $\wedge^\bullet L$.  For
$\alpha$, $\beta$ sections of $\wedge^\bullet L$, the bracket
$[\alpha,\beta]$ is a pointwise bilinear function of the
vertical 1-jets of $\alpha$ and $\beta$, and so \emph{a fortiori} it is a 
pointwise bilinear function of their full 1-jets on $X$.  Differentiating 
using the product rule, we obtain the bound 
\[
\|[\alpha,\beta]\|_{k} \leq C_k \|\alpha\|_{k+1}\|\beta\|_{k+1},
\]
where all norms are now $C^k$ norms over all of $X$. The right hand side 
is of type $\LP(\|\alpha\|_{k+1},\|\beta\|_{k+1})$ in the notation of \cite{Bailey}, as required for the SCI formalism.

\bibliographystyle{hyperamsplain}
\bibliography{gc local}

\end{document}